\newcommand{\sysn}{\left\{\begin{array}{rcl}}
\newcommand{\sysk}{\end{array}\right.}
\newtheorem{theorem}{Theorem}[section]
\theoremstyle{example}
\newtheorem{proposition}[theorem]{Proposition}
\theoremstyle{definition}
\newtheorem{definition}[theorem]{Definition}
\newtheorem{corollary}[theorem]{Corollary}
\journal{...}
\begin{document}

\title{Some observations on a clopen version of the Rothberger property}

\author[affil1]{Manoj Bhardwaj}

\address[affil1]{Department of Mathematics, University of Delhi, New Delhi-110007, India}

\ead[affil1]{manojmnj27@gmail.com}

\author[affil2]{Alexander V. Osipov}

\address[affil2]{Krasovskii Institute of Mathematics and Mechanics, \\ Ural Federal
 University, Yekaterinburg, Russia}

\ead[affil2]{OAB@list.ru}

\begin{abstract}

In this paper, we proved that a clopen version
$S_1(\mathcal{C}_\mathcal{O}, \mathcal{C}_\mathcal{O})$ of the
Rothberger property  and Borel strong measure zeroness are independent.
For a zero-dimensional metric space $(X,d)$, $X$ satisfies
$S_1(\mathcal{C}_\mathcal{O}, \mathcal{C}_\mathcal{O})$ if, and
only if, $X$ has Borel strong measure zero with respect to each metric
which has a same topology as $d$ has. In a zero-dimensional space,
the game $G_1(\mathcal{O}, \mathcal{O})$ is equivalent to the game
$G_1(\mathcal{C}_\mathcal{O}, \mathcal{C}_\mathcal{O})$ and the
point-open game is equivalent to the point-clopen game.  Using
reflections, we obtained that the game
$G_1(\mathcal{C}_\mathcal{O}, \mathcal{C}_\mathcal{O})$ and the
point-clopen game are strategically and Markov dual. An example is
given for a space on which the game $G_1(\mathcal{C}_\mathcal{O},
\mathcal{C}_\mathcal{O})$ is undetermined.

\end{abstract}

\begin{keyword}
 strong measure zero \sep selection principles \sep point-clopen game \sep zero-dimensional space

\MSC[2010] 54D20 \sep 54A20

\end{keyword}

\maketitle 

\section{Introduction}\label{sec1}

  In 1938, Rothberger \cite{H34b} (see also \cite{H4}) introduced covering
property in topological spaces.  A space $X$ is said to have
\textit{Rothberger property} if for each sequence $\langle
\mathcal{U}_n : n \in \omega \rangle$ of open covers of $X$ there
is a sequence $\langle V_n : n \in \omega \rangle$ such that for
each $n$, $V_n$ is an element of $\mathcal{U}_n$ and each $x \in
X$ belongs to $V_n$ for some $n$\label{2.1}. This property is
stronger than Lindel\"{o}f and preserved under continuous images.

Usually, each selection principle $S_{1}(\mathcal{A},
\mathcal{B})$ can be associated with some topological game
$G_{1}(\mathcal{A},\mathcal{B})$. So the Rothberger property
$S_{1}(\mathcal{O}, \mathcal{O})$  is associated with the
Rothberger game $G_{1}(\mathcal{O}, \mathcal{O})$.

Let $X$ be a topological space. The Rothberger game
$G_{1}(\mathcal{O}, \mathcal{O})$ played on $X$ is a game with two
players Alice and Bob.

{\bf 1st round:} Alice chooses an open cover $\mathcal{U}_1$ of
$X$. Bob chooses a set $U_1\in\mathcal{U}_1$.

{\bf 2st round:} Alice chooses an open cover $\mathcal{U}_2$ of
$X$. Bob chooses a set $U_2\in\mathcal{U}_2$.

{\bf etc.}

If the family $\{U_n: n\in\omega\}$ is a cover of the space $X$
then Bob wins the game $G_1(\mathcal{O}, \mathcal{O})$. Otherwise,
Alice wins.

A topological space is Rothberger if, and only if, Alice has no
winning strategy in the game $G_1(\mathcal{O}, \mathcal{O})$
\cite{H27a}.

In \cite{H34a} Galvin proved that for a first-countable space $X$
Bob has a winning strategy in $G_{1}(\mathcal{O}, \mathcal{O})$
if, and only if, $X$ is countable.

In this paper, we continue to study the mildly Rothberger-type properties, started in papers \cite{BO1,BO3,BO4}, and, we define a new game - the mildly Rothberger game
$G_1(\mathcal{C}_\mathcal{O}, \mathcal{C}_\mathcal{O})$. In a
zero-dimensional space, the Rothberger game is equivalent to the
mildly Rothberger game. Using reflections, we obtained that
$G_1(\mathcal{C}_\mathcal{O}, \mathcal{C}_\mathcal{O})$ and the
point-clopen game are strategically and Markov dual.

\section{Preliminaries}\label{sec2}

Let $(X,\tau)$ or $X$ be a topological space.  If a set is open
and closed in a topological space, then it is called {\it clopen}.
Let $\omega$ be the first infinite cardinal and $\omega_1$ the
first uncountable cardinal.  For the terms and symbols that we do
not define follow \cite{H10}.

Let $\mathcal{A}$ and $\mathcal{B}$ be collections of open covers of a topological space $X$.

The symbol $S_1(\mathcal{A}, \mathcal{B})$ denotes the selection
hypothesis that for each sequence $\langle \mathcal{U}_n : n \in
\omega \rangle$ of elements of $\mathcal{A}$ there exists a
sequence $\langle U_n : n \in \omega \rangle$ such that for each
$n$, $U_n \in \mathcal{U}_n$ and $\{U_n : n \in \omega\} \in
\mathcal{B}$ \cite{H1}.


In this paper $\mathcal{A}$ and $\mathcal{B}$ will be collections of the following open covers of a space $X$:

$\mathcal{O}$ : the collection of all open covers of $X$.

$\mathcal{C}_\mathcal{O}$ : the collection of all clopen covers of $X$.

\medskip

Clearly, $X$ has the Rothberger property if, and only if, $X$
satisfies $S_{1}(\mathcal{O}, \mathcal{O})$.

\medskip

A space $X$ is said to have {\it  mildly Rothberger property} if
it satisfies the selection principles
$S_1(\mathcal{C}_\mathcal{O}, \mathcal{C}_\mathcal{O})$.

It can be noted that $S_1(\mathcal{O}, \mathcal{O}) \Rightarrow
S_1(\mathcal{C}_\mathcal{O}, \mathcal{C}_\mathcal{O})$ and also
every connected space must satisfy $S_1(\mathcal{C}_\mathcal{O},
\mathcal{C}_\mathcal{O})$. Then the set of real numbers with usual
topology satisfies $S_1(\mathcal{C}_\mathcal{O},
\mathcal{C}_\mathcal{O})$ but it does not satisfy
$S_1(\mathcal{O}, \mathcal{O})$.

Let $(X,\tau)$ be a topological space and $\mathcal{T}_X=\tau\setminus \{\emptyset\}$ be a topology without empty set.

$\bullet$ Let $\mathcal{T}_{X,x}=\{U\in \mathcal{T}_X: x\in U\}$
be the local point-base at $x\in X$.

$\bullet$ Let $\mathcal{P}_X=\{\mathcal{T}_{X,x}: x\in X\}$ be the
collection of local point-bases of $X$.

$\bullet$ Let $\mathcal{C}_{\mathcal{T}_{X,x}}=\{U\in
\mathcal{T}_X:$ $U$ is a clopen set in $X$, $x\in U\}$.

$\bullet$ Let $\mathcal{C}_{X}=\{\mathcal{C}_{\mathcal{T}_{X,x}}:
x\in X\}$.

\section{Results on $S_1(\mathcal{C}_\mathcal{O}, \mathcal{C}_\mathcal{O})$}

\subsection{$S_1(\mathcal{C}_\mathcal{O}, \mathcal{C}_\mathcal{O})$ and Borel strong measure zeroness are independent}
Recall that a set of reals $X$ is {\it null} (or has measure zero)
if for each positive $\epsilon$ there exists a cover $\{I_n\}_{n
\in \omega}$ of $X$ such that $\Sigma_n$ diam$(I_n) < \epsilon$.

To restrict the notion of measure zero or null set, in 1919, Borel \cite{H34} defined a notion stronger than measure zeroness. Now this notion is known as strong measure zeroness or strongly null set.

Borel strong measure zero: $Y$ is {\it Borel strong measure zero}
if there is for each sequence $\langle \epsilon_n : n \in \omega
\rangle$ of positive real numbers a sequence $\langle J_n : n \in
\omega \rangle$ of subsets of $Y$ such that each $J_n$ is of
diameter $< \epsilon_n$, and $Y$ is covered by $\{J_n : n \in
\omega\}$.

But Borel was unable to construct a nontrivial (that is, an uncountable) example of a Borel strong measure zero set. He therefore conjectured that there exists no such examples.

In 1928, Sierpinski observed that every Luzin set is Borel strong measure zero, thus the Continuum
Hypothesis implies that Borel’s Conjecture is false.

Sierpinski asked whether the property of being Borel strong measure zero is preserved under taking homeomorphic (or even continuous) images.

In 1941, the answer given by Rothberger is negative under the Continuum Hypothesis. This lead Rothberger to introduce the following topological version of Borel strong measure zero (which is preserved under taking continuous images).

In 1988, Miller and Fremlin \cite{H56} proved that a space $Y$ has
the Rothberger property ($S_1(\mathcal{O},\mathcal{O})$) if, and
only if, it has Borel strong measure zero with respect to each
metric on $Y$ which generates the topology of $Y$.

Recall that a space $X$ is zero-dimensional if it has a base
consisting clopen sets. Now we show that
$S_1(\mathcal{C}_\mathcal{O}, \mathcal{C}_\mathcal{O})$ and Borel strong measure zeroness are independent to each other. Since the set of
real numbers does not have measure zero, it does not have Borel strong measure zero but it satisfies $S_1(\mathcal{C}_\mathcal{O},
\mathcal{C}_\mathcal{O})$. Since every metric space with Borel strong measure zero must be zero-dimensional and separable,
$S_1(\mathcal{C}_\mathcal{O}, \mathcal{C}_\mathcal{O})$ is
equivalent to $S_1(\mathcal{O}, \mathcal{O})$ (see below Theorem
\ref{a1} ). So by Theorem 6(c) \cite{H56}, there is a subset of
reals with Borel strong measure zero but it does not satisfy
$S_1(\mathcal{C}_\mathcal{O}, \mathcal{C}_\mathcal{O})$.

The proof of the following result easily follows from replacing
the open sets with sets of a clopen base of the topological space.

\begin{theorem} \label{a1}
For a zero-dimensional space $X$, $S_1(\mathcal{C}_\mathcal{O}, \mathcal{C}_\mathcal{O})$ is equivalent to $S_1(\mathcal{O}, \mathcal{O})$.
\end{theorem}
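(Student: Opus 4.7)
The plan is to establish equivalence by proving each implication separately, with the forward implication being routine and the reverse implication relying on the clopen base granted by zero-dimensionality.

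First I would observe that $S_1(\mathcal{O}, \mathcal{O}) \Rightarrow S_1(\mathcal{C}_\mathcal{O}, \mathcal{C}_\mathcal{O})$ is immediate (and has already been noted earlier in the text), since every clopen cover is in particular an open cover, so any sequence of clopen covers can be fed into the $S_1(\mathcal{O}, \mathcal{O})$ selector; the chosen sets $U_n$ will automatically lie in the prescribed $\mathcal{U}_n \in \mathcal{C}_\mathcal{O}$, and they cover $X$.

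The substantive direction is the converse. Fix a clopen base $\mathcal{B}$ for $X$, which exists by zero-dimensionality. Given an arbitrary sequence $\langle \mathcal{U}_n : n \in \omega \rangle$ of open covers of $X$, I would build, for each $n$, a refinement
$$\mathcal{V}_n = \{B \in \mathcal{B} : B \subseteq U \text{ for some } U \in \mathcal{U}_n\}.$$
Because $\mathcal{B}$ is a base and $\mathcal{U}_n$ covers $X$, $\mathcal{V}_n$ is a clopen cover of $X$. Now apply the hypothesis $S_1(\mathcal{C}_\mathcal{O}, \mathcal{C}_\mathcal{O})$ to the sequence $\langle \mathcal{V}_n : n \in \omega \rangle$ to obtain $V_n \in \mathcal{V}_n$ with $\{V_n : n \in \omega\}$ covering $X$. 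For each $n$ choose $U_n \in \mathcal{U}_n$ with $V_n \subseteq U_n$ (this is possible by construction of $\mathcal{V}_n$; use the axiom of choice at this step). Then $\{U_n : n \in \omega\}$ covers $X$, witnessing $S_1(\mathcal{O}, \mathcal{O})$.

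There is no real obstacle; the only thing to be careful about is the refinement step, which works precisely because a clopen base is available. This is exactly the remark in the paragraph preceding the theorem, that the proof reduces to replacing open sets by sets of a clopen base.
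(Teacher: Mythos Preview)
Your proof is correct and follows precisely the approach the paper indicates: the forward implication is trivial, and for the converse you refine each open cover by a clopen base and then enlarge the selected clopen sets back to members of the original covers. The paper gives no explicit argument beyond the sentence you quoted, so your write-up is simply a fleshed-out version of the same idea.
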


From Theorem 1 in \cite{H56}, we obtain the following corollary.

\begin{corollary} {\it
For a zero-dimensional metric space $(X,d)$ the following
statements are equivalent :
\begin{enumerate}
\item $X$ satisfies $S_1(\mathcal{O}, \mathcal{O})$; \item $X$
satisfies $S_1(\mathcal{C}_\mathcal{O}, \mathcal{C}_\mathcal{O})$;
\item $X$ has Borel strong measure zero with respect to every metric
which generates the original topology; \item every continuous
image of $X$ in Baire space $\omega^\omega$ with usual metric has Borel strong measure zero.
\end{enumerate}}
\end{corollary}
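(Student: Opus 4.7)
The plan is to derive the four-way equivalence by combining Theorem \ref{a1} of this paper with Theorem 1 of \cite{H56}. The zero-dimensionality hypothesis is used solely to fold the clopen-cover principle (2) into the list of equivalents; the equivalences connecting (1), (3) and (4) are purely metric-theoretic statements about the Rothberger property that are already contained in the Miller--Fremlin theorem.

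First I would dispatch (1) $\Leftrightarrow$ (2) by a direct appeal to Theorem \ref{a1}: since $X$ is zero-dimensional, the two selection principles $S_1(\mathcal{O},\mathcal{O})$ and $S_1(\mathcal{C}_\mathcal{O},\mathcal{C}_\mathcal{O})$ are interchangeable, and no further argument is needed at this stage.

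Next I would quote Theorem 1 of \cite{H56} to handle (1) $\Leftrightarrow$ (3) $\Leftrightarrow$ (4). Miller and Fremlin state that a metric space is Rothberger if and only if it is Borel strong measure zero in every compatible metric, and further if and only if every continuous image in $\omega^\omega$ is Borel strong measure zero in the usual metric. Applied directly to $(X,d)$, this settles the three remaining equivalences at once. For completeness, the implication (1) $\Rightarrow$ (4) can also be seen independently from the fact that the Rothberger property is preserved by continuous maps, together with the equivalence (1) $\Leftrightarrow$ (3) applied to each image $f(X)\subseteq\omega^{\omega}$.

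I do not foresee any serious obstacle in carrying out this plan. The one small point worth a sentence of explanation is that the continuous-image clause of Miller--Fremlin refers precisely to images inside Baire space $\omega^{\omega}$, matching clause (4) verbatim, so the translation of their statement into the language of the corollary is routine; similarly, the hypothesis that $(X,d)$ is metric allows the Miller--Fremlin theorem to be invoked without any additional separability discussion, since any metric space satisfying (3) or (4) is automatically Lindel\"of and hence separable. The novelty contributed by the corollary therefore lies entirely in the presence of (2), which is where Theorem \ref{a1} enters.
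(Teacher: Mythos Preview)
Your proposal is correct and matches the paper's approach exactly: the paper presents this corollary with no proof beyond the sentence ``From Theorem 1 in \cite{H56}, we obtain the following corollary,'' placed immediately after Theorem \ref{a1}. You have simply made explicit the intended combination---Theorem \ref{a1} for (1)$\Leftrightarrow$(2) and Miller--Fremlin's Theorem 1 for (1)$\Leftrightarrow$(3)$\Leftrightarrow$(4)---which is precisely what the paper leaves implicit.
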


\subsection{Dual selection games}

 The {\it selection game} $G_1(\mathcal{A},\mathcal{B})$ is an
 $\omega$-length game for two players, Alice and Bob. During round $n$,
 Alice choose $A_n\in \mathcal{A}$, followed by Bob choosing
 $B_n\in A_n$. Player Bob wins in the case that $\{B_n:
 n<\omega\}\in \mathcal{B}$, and Player Alice wins otherwise.

 We consider the following strategies:

$\bullet$ A {\it strategy for player Alice} in
$G_1(\mathcal{A},\mathcal{B})$ is a function $\sigma: (\bigcup
\mathcal{A})^{<\omega}\rightarrow \mathcal{A}$. A strategy
$\sigma$ for Alice is called {\it winning} if whenever $x_n\in
\sigma\langle x_i : i<n\rangle$ for all $n<\omega$, $\{x_n: n\in
\omega\}\not\in \mathcal{B}$. If player Alice has a winning
strategy, we write $Alice \uparrow G_1(\mathcal{A},\mathcal{B})$.

$\bullet$ A strategy for player Bob in
$G_1(\mathcal{A},\mathcal{B})$ is a function $\tau:
\mathcal{A}^{<\omega}\rightarrow \bigcup \mathcal{A}$. A strategy
$\tau$ for Bob is {\it winning} if $A_n\in \mathcal{A}$ for all
$n<\omega$, $\{\tau(A_0,...,A_n): n<\omega\}\in \mathcal{B}$.

$\bullet$ A {\it predetermined strategy} for Alice is a strategy
which only considers the current turn number. Formally it is a
function $\sigma: \omega \rightarrow \mathcal{A}$. If Alice has a
winning predetermined strategy, we write $Alice {\uparrow\atop
pre} G_1(\mathcal{A},\mathcal{B})$.

$\bullet$ A {\it Markov strategy} for Bob is a strategy which only
considers the most recent move of player Alice and the current
turn number. Formally it is a function $\tau: \mathcal{A}\times
\omega \rightarrow \bigcup \mathcal{A}$. If Bob has a winning
Markov strategy, we write $Bob {\uparrow\atop mark}
G_1(\mathcal{A},\mathcal{B})$.
\medskip

Note that, $Bob {\uparrow\atop mark} G_1(\mathcal{A},\mathcal{B})$
$\Rightarrow$ $Bob \uparrow G_1(\mathcal{A},\mathcal{B})$
$\Rightarrow$ $Alice \not\uparrow G_1(\mathcal{A},\mathcal{B})$
$\Rightarrow$ $Alice {\not\uparrow\atop pre}
G_1(\mathcal{A},\mathcal{B})$.

It's worth noting that $Alice {\not\uparrow\atop pre}
G_1(\mathcal{A},\mathcal{B})$ is equivalent to the selection
principle $S_1(\mathcal{A},\mathcal{B})$.

\medskip

 Two games $G_1$ and $G_2$ are said to be
{\bf strategically dual} provided that the following two hold:

$\bullet$ $Alice \uparrow G_1$ iff $Bob \uparrow G_2$

$\bullet$ $Alice \uparrow G_2$ iff $Bob \uparrow G_1$.

\medskip

Two games $G_1$ and $G_2$ are said to be {\bf Markov dual}
provided that the following two hold:

$\bullet$ $Alice {\uparrow\atop pre} G_1$ iff $Bob {\uparrow\atop
mark} G_2$

$\bullet$ $Alice {\uparrow\atop pre} G_2$ iff $Bob {\uparrow\atop
mark} G_1$.

\medskip

Two games $G_1$ and $G_2$ are said to be {\bf dual} provided that
they are both strategically dual and Markov dual.

\medskip

 For a set $X$, let $\mathcal{C}(X)=\{f\in (\bigcup X)^X: x\in
 X\Rightarrow f(x)\in x\}$ be the collection of all choice
 functions on $X$.

Write $X\preceq Y$ if $X$ is coinitial in $Y$ with respect to
$\subseteq$; that is, $X\subseteq Y$, and for all $y\in Y$, there
exists $x\in X$ such that $x\subseteq y$.

In the context of selection games, $\mathcal{A}'$ is a {\it
selection basis} for $\mathcal{A}$ when $\mathcal{A}'\preceq
\mathcal{A}$ \cite{clontz}.

\begin{definition}\cite{clontz}
The set $\mathcal{R}$ is said to be a {\bf reflection} of the set
$\mathcal{A}$ if $\{range(f): f\in \mathcal{C}(\mathcal{R})\}$ is
a selection basis for $\mathcal{A}$.

\end{definition}

 Let $G_1(\mathcal{A}, \neg \mathcal{B}):=G_1(\mathcal{A},
 \mathcal{P}(\bigcup \mathcal{A})\setminus \mathcal{B})$.

\begin{theorem}\label{Thcl}$($\cite{clontz}, Corollary 26$)$
If $\mathcal{R}$ is a reflection of $\mathcal{A}$, then
$G_1(\mathcal{A},\mathcal{B})$ and $G_1(\mathcal{R}, \neg
\mathcal{B})$ are dual.
\end{theorem}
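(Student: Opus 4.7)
The plan is to prove both strategic and Markov duality by simulation arguments powered by two basic translations that come from the reflection hypothesis. \textbf{(F1)} For each $A \in \mathcal{A}$ there is a \emph{witness} $f_A \in \mathcal{C}(\mathcal{R})$ with $\mathrm{range}(f_A) \subseteq A$. \textbf{(F2)} For each $f \in \mathcal{C}(\mathcal{R})$ we have $\mathrm{range}(f) \in \mathcal{A}$. Both are immediate from the definition of selection basis. Heuristically, (F1) converts an $\mathcal{A}$-move into a simultaneous Bob-response on every possible $\mathcal{R}$-move, while (F2) bundles any such simultaneous response back into an $\mathcal{A}$-move.

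I would first dispatch the Markov pair $\mathrm{Alice}\uparrow_{\mathrm{pre}}G_1(\mathcal{A},\mathcal{B}) \Leftrightarrow \mathrm{Bob}\uparrow_{\mathrm{mark}}G_1(\mathcal{R},\neg\mathcal{B})$. Given Alice's winning predetermined $\sigma:\omega\to\mathcal{A}$, I pick witnesses $f_n$ for each $\sigma(n)$ and define $\tau(R,n):=f_n(R)$; any dual play $R_0,R_1,\ldots$ makes Bob play $f_n(R_n)\in\sigma(n)$, a legal selection against $\sigma$ whose outcome therefore avoids $\mathcal{B}$. Conversely, given Bob's winning Markov $\tau$, the functions $f_n(R):=\tau(R,n)$ are in $\mathcal{C}(\mathcal{R})$, so by (F2) the sets $\sigma(n):=\mathrm{range}(f_n)$ lie in $\mathcal{A}$; any selection $B_n\in\sigma(n)$ factors as $\tau(R_n,n)$ for some $R_n$, tracing a dual play against $\tau$ whose outcome must avoid $\mathcal{B}$, so $\sigma$ is winning.

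For strategic duality I would iterate the same translations along histories. Given Alice's winning $\sigma$ in $G_1(\mathcal{A},\mathcal{B})$, I define Bob's dual strategy so that, at round $n$ with dual moves $R_0,\ldots,R_n$ and previously-simulated primal responses $B_0,\ldots,B_{n-1}$, Bob computes $A_n:=\sigma(B_0,\ldots,B_{n-1})$, picks a witness $f_n$ for $A_n$, and plays $B_n:=f_n(R_n)\in A_n$; the complete simulation is a legal play against $\sigma$, so $\{B_n\}\notin\mathcal{B}$ and Bob wins. Going the other way, from Bob's winning $\tau$ one recovers $\sigma$ for Alice by a recursive inversion: along a primal history $b_0,\ldots,b_{n-1}$ one uses the axiom of choice to pick $R_k\in\mathcal{R}$ with $\tau(R_0,\ldots,R_k)=b_k$ for each $k<n$, then sets $f_n(R):=\tau(R_0,\ldots,R_{n-1},R)$ and $\sigma(b_0,\ldots,b_{n-1}):=\mathrm{range}(f_n)\in\mathcal{A}$ by (F2).

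The main obstacle is the consistency of this inverse recursion: at each step $k$ a preimage $R_k$ must exist, and this holds precisely because $b_k\in\sigma(b_0,\ldots,b_{k-1})=\mathrm{range}(f_k)$ by construction of the previous step, so the recursion can proceed. Once this is verified, every legal Bob-play $b_0,b_1,\ldots$ against $\sigma$ is matched by a legal dual play $R_0,R_1,\ldots$ against $\tau$ producing the same outputs, so $\{b_n\}\notin\mathcal{B}$ and $\sigma$ is winning. The two remaining equivalences, in which Alice plays in $G_1(\mathcal{R},\neg\mathcal{B})$ and Bob in $G_1(\mathcal{A},\mathcal{B})$, are handled by the same recipe with the roles of (F1) and (F2) interchanged: (F1) turns an $\mathcal{R}$-sequence $R_0,R_1,\ldots$ of Alice into Bob's Markov strategy $\tau(A,n):=f_A(R_n)$ in $G_1(\mathcal{A},\mathcal{B})$, extended to full strategies by the same historical recursion, while (F2) and choice-function factoring drive the converses.
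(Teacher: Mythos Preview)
The paper does not prove this theorem; it is simply quoted as Corollary~26 of \cite{clontz}, so there is no in-paper argument to compare against. I can only assess your proposal on its own merits.

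Your translations (F1) and (F2) are correct, and your treatment of the pair $\mathrm{Alice}\uparrow G_1(\mathcal{A},\mathcal{B})\Leftrightarrow\mathrm{Bob}\uparrow G_1(\mathcal{R},\neg\mathcal{B})$ (both the Markov and the full-strategy versions) is sound, including the recursive inversion you describe. The forward half of the remaining pair, $\mathrm{Alice}\uparrow G_1(\mathcal{R},\neg\mathcal{B})\Rightarrow\mathrm{Bob}\uparrow G_1(\mathcal{A},\mathcal{B})$, via $\tau(A,n):=f_A(R_n)$ is also fine.

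The gap is the converse $\mathrm{Bob}\uparrow G_1(\mathcal{A},\mathcal{B})\Rightarrow\mathrm{Alice}\uparrow G_1(\mathcal{R},\neg\mathcal{B})$. This direction is \emph{not} symmetric to what you did before, and the phrase ``(F2) and choice-function factoring'' does not convey the missing idea. Concretely: given Bob's (say Markov) $\tau$ with $\tau(A,n)\in A$, Alice must produce $R_n\in\mathcal{R}$ with the property that \emph{every} $r\in R_n$ equals $\tau(A,n)$ for some $A\in\mathcal{A}$; only then can she invert as you did earlier. Nothing in (F1)/(F2) hands you such an $R_n$ directly. What is needed is a diagonal argument: if no $R\in\mathcal{R}$ worked, pick for each $R$ some $r_R\in R\setminus\{\tau(\mathrm{range}(f),n):f\in\mathcal{C}(\mathcal{R})\}$; then the choice function $f^{\ast}(R):=r_R$ lies in $\mathcal{C}(\mathcal{R})$, so by (F2) $\mathrm{range}(f^{\ast})\in\mathcal{A}$ and $\tau(\mathrm{range}(f^{\ast}),n)\in\mathrm{range}(f^{\ast})=\{r_R:R\in\mathcal{R}\}$, a contradiction. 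The full-strategy case needs this same step threaded through the recursion, applied at stage $n$ to $\tau(A_0,\ldots,A_{n-1},\cdot)$ for the already-simulated history. Without this argument your sketch for the second pair is incomplete.
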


The point-open game $PO(X)$ is a game where Alice chooses points
of $X$, Bob chooses an open neighborhood of each chosen point, and
Alice wins if Bob's choices are a cover.

\begin{theorem} \cite{H34a}
The game $G_1(\mathcal{O}, \mathcal{O})$ is strategically dual to
the point-open game on each topological space.
\end{theorem}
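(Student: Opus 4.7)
The plan is to deduce this theorem from the reflection duality (Theorem \ref{Thcl}) applied with $\mathcal{A}=\mathcal{O}$ and $\mathcal{R}=\mathcal{P}_X$. So I would proceed in two steps: first verify that $\mathcal{P}_X$ is a reflection of $\mathcal{O}$ in the sense defined in the excerpt (the only non-formal step), and then observe that $G_1(\mathcal{P}_X,\neg\mathcal{O})$ is literally a rewriting of the point-open game $PO(X)$. After that, Theorem \ref{Thcl} delivers the conclusion with no further work.

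To see that $\mathcal{P}_X$ reflects $\mathcal{O}$, I would unpack the definition of a choice function. Any $f\in\mathcal{C}(\mathcal{P}_X)$ assigns to each local point-base $\mathcal{T}_{X,x}$ some element $f(\mathcal{T}_{X,x})\in\mathcal{T}_{X,x}$, i.e., an open neighborhood of $x$. Therefore $\mathrm{range}(f)$ is a family of open sets such that every $x\in X$ is covered by at least one of its members, so $\mathrm{range}(f)\in\mathcal{O}$. Conversely, given any open cover $\mathcal{U}\in\mathcal{O}$, for each $x\in X$ I would pick $U_x\in\mathcal{U}$ with $x\in U_x$ and set $f(\mathcal{T}_{X,x}):=U_x$; this is a choice function whose range lies inside $\mathcal{U}$. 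These two observations together say precisely that $\{\mathrm{range}(f):f\in\mathcal{C}(\mathcal{P}_X)\}\preceq\mathcal{O}$, which is the definition of reflection.

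Next I would identify the two games. In $G_1(\mathcal{P}_X,\neg\mathcal{O})$, Alice picks some $\mathcal{T}_{X,x_n}\in\mathcal{P}_X$, which is tantamount to naming the point $x_n$; Bob then picks $U_n\in\mathcal{T}_{X,x_n}$, i.e., an open neighborhood of $x_n$; and Bob wins iff the resulting family $\{U_n:n<\omega\}$ is \emph{not} an open cover of $X$. Equivalently, Alice wins iff $\{U_n:n<\omega\}\in\mathcal{O}$, which is exactly the winning condition for Alice in $PO(X)$ as described just before the theorem.

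Combining these two ingredients, Theorem \ref{Thcl} yields that $G_1(\mathcal{O},\mathcal{O})$ and $G_1(\mathcal{P}_X,\neg\mathcal{O})=PO(X)$ are (fully) dual and, in particular, strategically dual, which is the desired conclusion. The only conceptually delicate point is the role-reversal encoded by $\neg$: in the Rothberger game Bob wants a cover, whereas in the point-open game Alice wants one, and it is precisely this swap that the reflection formalism is designed to absorb — so once the reflection property is checked, the theorem falls out automatically from Theorem \ref{Thcl}.
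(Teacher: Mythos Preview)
Your proposal is correct and matches the paper's own treatment: the paper does not give an independent proof of this cited theorem but, in the paragraph following the corollary, derives the full duality (hence strategic duality) by exactly your route---observing that $\mathcal{P}_X$ is a reflection of $\mathcal{O}$ (there cited from \cite{clontz}), applying Theorem~\ref{Thcl}, and identifying $G_1(\mathcal{P}_X,\neg\mathcal{O})$ with the point-open game.
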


\begin{theorem} \cite{clHl}
The game $G_1(\mathcal{O}, \mathcal{O})$ is Markov dual to the
point-open game on each topological space.
\end{theorem}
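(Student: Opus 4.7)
The plan is to recognize this theorem as an immediate consequence of the reflection framework Theorem \ref{Thcl}, taking $\mathcal{A} = \mathcal{B} = \mathcal{O}$ and $\mathcal{R} = \mathcal{P}_X$, where $\mathcal{P}_X = \{\mathcal{T}_{X,x} : x \in X\}$ is the collection of local point-bases defined in the Preliminaries. Theorem \ref{Thcl} yields both strategic and Markov duality at once, so Markov duality is obtained with no additional work beyond verifying the hypotheses. What remains is (i) to identify the point-open game $PO(X)$ with the selection game $G_1(\mathcal{P}_X, \neg\mathcal{O})$ and (ii) to check that $\mathcal{P}_X$ is a reflection of $\mathcal{O}$.

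For (i): in $PO(X)$, Alice picks a point $x \in X$, which is equivalent to her picking the local point-base $\mathcal{T}_{X,x} \in \mathcal{P}_X$, and Bob then responds with an open neighborhood of $x$, i.e., an element of $\mathcal{T}_{X,x}$. Alice wins $PO(X)$ exactly when Bob's sequence of neighborhoods covers $X$, or equivalently Bob wins exactly when it fails to cover $X$. This matches the winning condition for Bob in $G_1(\mathcal{P}_X, \neg\mathcal{O})$, so the two games coincide move-for-move and outcome-for-outcome.

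For (ii): a choice function $f \in \mathcal{C}(\mathcal{P}_X)$ assigns to each $\mathcal{T}_{X,x}$ some open neighborhood of $x$, so $range(f)$ is automatically an open cover of $X$, giving $\{range(f) : f \in \mathcal{C}(\mathcal{P}_X)\} \subseteq \mathcal{O}$. Conversely, given any $\mathcal{U} \in \mathcal{O}$, for each $x \in X$ select some $U_x \in \mathcal{U}$ with $x \in U_x$ (possible because $\mathcal{U}$ is a cover) and define $f(\mathcal{T}_{X,x}) = U_x$; then $range(f) \subseteq \mathcal{U}$. This witnesses $\{range(f) : f \in \mathcal{C}(\mathcal{P}_X)\} \preceq \mathcal{O}$, so $\mathcal{P}_X$ is a reflection of $\mathcal{O}$. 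Theorem \ref{Thcl} then yields that $G_1(\mathcal{O}, \mathcal{O})$ and $G_1(\mathcal{P}_X, \neg\mathcal{O}) = PO(X)$ are dual, in particular Markov dual. The only real obstacle is bookkeeping with the definitions of $\mathcal{C}(\cdot)$, $\preceq$, and $\mathcal{P}_X$; no combinatorial or topological argument beyond this identification is needed.
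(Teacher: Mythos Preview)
Your argument is correct and matches the paper's own treatment: the theorem is stated as a citation to \cite{clHl}, but immediately after the ensuing corollary the paper gives exactly your reasoning, noting that $\mathcal{P}_X$ is a reflection of $\mathcal{O}$ (citing \cite{clontz}, Proposition 28), that Theorem~\ref{Thcl} then gives duality of $G_1(\mathcal{O},\mathcal{O})$ with $G_1(\mathcal{P}_X,\neg\mathcal{O})$, and that the latter game is the point-open game. The only minor wrinkle in your write-up is that defining $f(\mathcal{T}_{X,x})=U_x$ requires a word when distinct points share the same local point-base, but since $\mathcal{T}_{X,x}=\mathcal{T}_{X,y}$ forces $x$ and $y$ to lie in the same open sets this causes no difficulty.
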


\medskip

\begin{corollary}{\it The game $G_1(\mathcal{O}, \mathcal{O})$ is dual to the
point-open game on each topological space}.
\end{corollary}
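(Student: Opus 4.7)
The plan is to observe that this corollary is an immediate consequence of the two theorems that precede it, combined with the definition of ``dual'' given earlier in the subsection. Recall that two games $G_1$ and $G_2$ are defined to be dual precisely when they are both strategically dual and Markov dual. So the task reduces to verifying each of these two conditions for the pair $(G_1(\mathcal{O},\mathcal{O}),\,PO(X))$.

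First, I would invoke the theorem attributed to Galvin, which states that $G_1(\mathcal{O},\mathcal{O})$ is strategically dual to the point-open game on every topological space; this yields the first bullet of the definition, namely the equivalences $Alice\uparrow G_1(\mathcal{O},\mathcal{O}) \Leftrightarrow Bob\uparrow PO(X)$ and $Alice\uparrow PO(X) \Leftrightarrow Bob\uparrow G_1(\mathcal{O},\mathcal{O})$. Next, I would invoke the theorem of Clontz--Holshouser, which provides the Markov duality, i.e.\ the equivalences $Alice {\uparrow\atop pre} G_1(\mathcal{O},\mathcal{O}) \Leftrightarrow Bob {\uparrow\atop mark} PO(X)$ and the symmetric statement with the two games exchanged.

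Since both conditions in the definition of dual games hold, the corollary follows directly. There is no real obstacle here: the entire content of the corollary is already packaged into the two quoted theorems, and the proof is essentially an invocation of the definition. The only thing worth making explicit in writing is that the definition of ``dual'' is the conjunction of ``strategically dual'' and ``Markov dual,'' so that concatenating the two prior results gives the claim on every topological space $X$ without any further topological assumption.
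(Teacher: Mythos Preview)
Your proposal is correct and matches the paper's approach exactly: the corollary is stated without proof in the paper because it follows immediately from combining the two preceding theorems (Galvin's strategic duality and Clontz--Holshouser's Markov duality) with the definition of ``dual'' as the conjunction of strategically dual and Markov dual.
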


 Recall that two games $G$ and $G^{'}$ are {\it equivalent} (isomorphic) if
\begin{enumerate}
\item $Alice \uparrow G$ iff $Alice \uparrow G^{'}$

\item $Bob \uparrow G$ iff $Bob \uparrow G^{'}$.
\end{enumerate}

\medskip

Since $\mathcal{P}_X$ is a reflection of $\mathcal{O}$
(\cite{clontz}, Proposition 28), the Rothberger game
$G_1(\mathcal{O},\mathcal{O})$ and $G_1(\mathcal{P}_X, \neg
\mathcal{O})$ are dual (\cite{clontz}, Corollary 29). It is well
known that the game $G_1(\mathcal{P}_X, \neg \mathcal{O})$ is
equivalent to the point-open game.

\subsection{The point-clopen and quasi-component-clopen games}

The {\it point-clopen game} $PC(X)$ on a space $X$ is played
according to the following rules:

In each inning $n \in \omega$, Alice picks a point $x_n \in X$, and then Bob chooses a clopen set $U_n \subseteq X$ with $x_n \in U_n$. At the end of the play
\begin{center}
$x_0, U_0, x_1, U_1, x_2, U_2, . . . , x_n, U_n, . . . $,
\end{center}
the winner is Alice if $X=\bigcup_{n \in \omega} U_n$, and Bob otherwise.

We denote the collection of all non-empty clopen subsets of a space $X$ by $\tau_{c}$ and the collection of all finite subsets of $\tau_{c}$ by $\tau^{< \omega}_{c}$.

A strategy for Alice in the point-clopen game on a space $X$ is a function $\varphi : \tau^{< \omega}_{c} \rightarrow X$.

A strategy for Bob in the point-clopen game on a space $X$ is a function $\psi : X^{< \omega} \rightarrow \tau_{c}$ such that, for all $\langle x_0, x_1,..., x_n \rangle \in X^{< \omega} \setminus \{\langle \rangle\}$, we have $x_n \in \psi(\langle x_0,..., x_n \rangle$) = $U_n$.

A strategy $\varphi : \tau^{< \omega}_{c} \rightarrow X$ for Alice
in the point-clopen game on a space $X$ is a winning strategy for Alice
if, for every sequence $\langle U_n : n \in \omega \rangle$ of
clopen subsets of a space $X$ such that $\forall n \in \omega,
(x_n = \varphi(\langle U_0, U_1,..., U_{n-1} \rangle) \in U_n)$,
we have $X=\bigcup_{n \in \omega} U_n$. If Alice has a
winning strategy in the point-clopen game on a space $X$, we write $Alice
{\uparrow} PC(X)$.

A strategy $\psi : X^{< \omega} \rightarrow \tau_{c}$ for Bob in the point-clopen game on a space $X$ is a
 winning strategy for Bob if, for every sequence $\langle x_n : n \in \omega \rangle$ of points of a space $X$,
 we have $X=\bigcup_{n \in \omega}\{U_n: U_n=\psi(\langle x_0, x_1,..., x_n \rangle)\}$. If Bob has a
winning strategy in the point-clopen game on a space $X$, we write $Bob
{\uparrow} PC(X)$.

The game $G_1(\mathcal{C}_\mathcal{O}, \mathcal{C}_\mathcal{O})$ is a game for two players, Alice and Bob, with an inning per each natural number $n$. In each inning, Alice picks a clopen cover of the space and Bob selects one member from this cover. Bob wins if the sets he selected throughout the game cover the space. If this is not the case, Alice wins.

\medskip
 The intersection of all clopen sets containing a
component is called a {\it quasi-component} of the space
\cite{enc}.

The {\it quasi-component-clopen game} $QC(X)$ on a space $X$ is
played according to the following rules :

In each inning $n \in \omega$, Alice picks a quasi-component $A_n$
of $X$, and then Bob chooses a clopen set $U_n \subseteq X$ with
$A_n \subseteq U_n$. At the end of the play
\begin{center}
$A_0, U_0, A_1, U_1, A_2, U_2, . . . , A_n, U_n, . . . $,
\end{center}
the winner is Alice if $X=\bigcup_{n \in \omega} U_n$,
and Bob otherwise.

We denote the collection of all quasi-components of a space $X$ by
$Q_X$ and the collection of all finite subsets of $Q_X$ by $Q_X^{<
\omega}$.

A strategy for Alice in the quasi-component-clopen game on a space
$X$ is a function $\varphi : \tau^{< \omega}_{c} \rightarrow Q_X$.

A strategy for Bob in the quasi-component-clopen game on a space
$X$ is a function $\psi : Q_X^{< \omega} \rightarrow \tau_{c}$
such that, for all $\langle A_0, A_1,..., A_n \rangle \in Q_X^{<
\omega} \setminus \{\langle \rangle\}$, we have $A_n \subseteq
\psi(\langle A_0,..., A_n \rangle$) = $U_n$.

A strategy $\varphi : \tau^{< \omega}_{c} \rightarrow Q_X$ for
Alice in the quasi-component-clopen game on a space $X$ is a winning
strategy for Alice if, for every sequence $\langle U_n : n \in
\omega \rangle$ of clopen subsets of a space $X$ such that
$\forall n \in \omega, (A_n = \varphi(\langle U_0, U_1,...,
U_{n-1} \rangle) \subseteq U_n)$, we have $X=\bigcup_{n
\in \omega} U_n$. If Alice has a winning strategy in the
quasi-component-clopen game on a space $X$, we write $Alice {\uparrow}
QC(X)$.

A strategy $\psi : Q_X^{< \omega} \rightarrow \tau_{c}$ for Bob in
the quasi-component-clopen game on a space $X$ is a winning strategy for
Bob if, for every sequence $\langle A_n : n \in \omega \rangle$ of
quasi-components of a space $X$, we have $X=\bigcup_{n
\in \omega}\{U_n: U_n=\psi(\langle A_0, A_1,..., A_n \rangle)\}$. If
Bob has a winning strategy in the quasi-component-clopen game on a space
$X$, we write $Bob {\uparrow} QC(X)$.

\begin{proposition} \label{b1} The point-clopen game is equivalent to the quasi-component-clopen game.
\end{proposition}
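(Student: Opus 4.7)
The plan is to exploit the single structural fact that makes the two games interchangeable: whenever $U$ is a clopen set and $x \in U$, the quasi-component $Q(x) = \bigcap\{V : x \in V,\ V \text{ clopen}\}$ satisfies $Q(x) \subseteq U$. Consequently, requiring a clopen response to ``contain the point $x$'' is exactly the same restriction as requiring it to ``contain the quasi-component $Q(x)$''. This lets me translate every strategy in one game into a strategy in the other in an essentially mechanical way, and I need to check all four implications (Alice $\uparrow PC \Leftrightarrow$ Alice $\uparrow QC$ and Bob $\uparrow PC \Leftrightarrow$ Bob $\uparrow QC$).

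For Alice's side, suppose $\varphi \colon \tau_c^{<\omega} \to X$ is a winning strategy for Alice in $PC(X)$. I define $\varphi'(U_0,\dots,U_{n-1}) := Q(\varphi(U_0,\dots,U_{n-1}))$, taking the quasi-component of $\varphi$'s point. Any legal Bob play $\langle U_n : n \in \omega\rangle$ against $\varphi'$ in $QC(X)$ satisfies $\varphi'(\cdot) \subseteq U_n$, which forces $\varphi(U_0,\dots,U_{n-1}) \in U_n$, so the same sequence is a legal Bob play against $\varphi$ in $PC(X)$; since $\varphi$ wins, $X \neq \bigcup_n U_n$, so $\varphi'$ wins. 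For the converse, given a winning $\varphi'$ for Alice in $QC(X)$, I use the axiom of choice to pick $\varphi(U_0,\dots,U_{n-1}) \in \varphi'(U_0,\dots,U_{n-1})$. Any Bob response containing this point automatically contains the whole quasi-component, so plays against $\varphi$ in $PC(X)$ lift to plays against $\varphi'$ in $QC(X)$, and the non-cover is inherited.

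For Bob's side, suppose $\psi \colon X^{<\omega} \to \tau_c$ is winning for Bob in $PC(X)$. Choose a representative point $r(A) \in A$ for each quasi-component $A \in Q_X$ and set $\psi'(A_0,\dots,A_n) := \psi(r(A_0),\dots,r(A_n))$. This is a legitimate $QC$-strategy: $\psi(r(A_0),\dots,r(A_n))$ is clopen and contains $r(A_n) \in A_n$, hence contains $A_n = Q(r(A_n))$. Against any Alice play $\langle A_n\rangle$ in $QC(X)$, the corresponding Alice play $\langle r(A_n)\rangle$ in $PC(X)$ produces identical Bob moves, so the non-covering transfers. For the reverse, given $\psi'$ winning for Bob in $QC(X)$, define $\psi(x_0,\dots,x_n) := \psi'(Q(x_0),\dots,Q(x_n))$; this clopen set contains $Q(x_n) \ni x_n$, so $\psi$ is legal, and any Alice play $\langle x_n\rangle$ in $PC(X)$ is mirrored by $\langle Q(x_n)\rangle$ in $QC(X)$, carrying over the non-cover.

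There is really no serious obstacle here; the only thing to watch is that in each of the four translations one must check (i) the produced strategy is well-typed (the clopen set has the right containment property) and (ii) the non-cover condition transfers identically because Bob's clopen moves in the two games are literally the same family of sets. An implicit use of the axiom of choice appears when lifting Alice's $QC$-strategy to $PC$ and when choosing representatives $r(A)$ for Bob's translation, but this is harmless.
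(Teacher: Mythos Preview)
Your strategy translations are exactly the ones used in the paper, and the structural observation that a clopen set containing $x$ automatically contains $Q(x)$ is precisely the hinge of the argument there as well. The one slip is that you have Alice's winning condition reversed: in both $PC(X)$ and $QC(X)$ as defined here, Alice wins when $X=\bigcup_{n}U_n$, so in your two Alice-implications it is the \emph{cover} (not the non-cover) that is inherited; once you swap $\neq$ for $=$ in those two places, the argument is correct and essentially identical to the paper's.
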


\begin{proof} Let $\varphi : \tau^{< \omega}_{c} \rightarrow X$ be
a winning strategy for Alice in the point-clopen game on a space
$X$. Then the function $\psi : \tau^{< \omega}_{c} \rightarrow
Q_X$ such that $\psi(\langle U_0, U_1,..., U_{n-1}
\rangle)=Q[\varphi(\langle U_0, U_1,..., U_{n-1} \rangle)]$
($Q[x]$ is the quasi-component of $x$) for every sequence $\langle
U_n : n \in \omega \rangle$ of clopen subsets of a space $X$ and
$n \in \omega$,  is a winning strategy for Alice in the
quasi-component-clopen game. This follows from the fact that
$x_n=\varphi(\langle U_0, U_1,..., U_{n-1} \rangle)\in
Q[x_n]\subseteq U_n$.

Let $\varphi : \tau^{< \omega}_{c} \rightarrow Q_X$ be a winning
strategy for Alice in the quasi-component-clopen game on a space
$X$. Then the function $\psi : \tau^{< \omega}_{c} \rightarrow X$
such that $\psi(\langle U_0, U_1,..., U_{n-1} \rangle)\in
\varphi(\langle U_0, U_1,..., U_{n-1} \rangle)$ for every sequence
$\langle U_n : n \in \omega \rangle$ of clopen subsets of a space
$X$ and $n \in \omega$,  is a winning strategy for Alice in the
point-clopen game.

Let $\psi : X^{< \omega} \rightarrow \tau_{c}$ be a winning
strategy for Bob in the point-clopen game on $X$. Then the
function $\rho : Q_X^{< \omega} \rightarrow \tau_{c}$ such that
$\rho(\langle A_0, A_1,..., A_n \rangle)=\psi(\langle x_0,
x_1,..., x_n \rangle)$ for every sequence $\langle A_n : n \in
\omega \rangle$ of quasi-components of a space $X$ and some
$x_0,...,x_n$ that $A_i=Q[x_i]$ for each $i=0,...,n$, is a winning
strategy for Bob in the quasi-component-clopen game.

Let $\psi : Q_X^{< \omega} \rightarrow \tau_{c}$ be a winning
strategy for Bob in the quasi-component-clopen game on $X$. Then
the function $\rho : X^{< \omega} \rightarrow \tau_{c}$ such that
$\rho(\langle x_0, x_1,..., x_n \rangle)=\psi(\langle A_0,
A_1,..., A_n \rangle)$ for every sequence $\langle x_n : n \in
\omega \rangle$ of points of a space $X$ where $A_i=Q[x_i]$ for
each $i=0,...,n$, is a winning strategy for Bob in the
point-clopen-clopen game.
\end{proof}

\begin{proposition} $\mathcal{C}_{X}$ is a reflection of
$\mathcal{C}_\mathcal{O}$.
\end{proposition}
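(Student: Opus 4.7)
The plan is to verify directly from the definitions that $\{range(f) : f \in \mathcal{C}(\mathcal{C}_X)\} \preceq \mathcal{C}_\mathcal{O}$, which is to say (i) every such range is itself a clopen cover of $X$, and (ii) every clopen cover of $X$ contains some choice-function range as a subfamily. The argument is the clopen counterpart of the proof in \cite{clontz} (Proposition 28) that $\mathcal{P}_X$ is a reflection of $\mathcal{O}$, so I expect only a routine unfolding of definitions with ``clopen neighborhood'' in place of ``open neighborhood''.

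For step (i), I would take an arbitrary $f \in \mathcal{C}(\mathcal{C}_X)$. Any element of $\mathcal{C}_X$ has the form $\mathcal{C}_{\mathcal{T}_{X,x}}$ for some $x \in X$, and by definition of a choice function $f(\mathcal{C}_{\mathcal{T}_{X,x}})$ is a clopen subset of $X$ containing $x$. Hence every member of $range(f)$ is clopen, and since $x \in f(\mathcal{C}_{\mathcal{T}_{X,x}}) \in range(f)$ for every $x \in X$, the family $range(f)$ is a clopen cover of $X$. This gives $\{range(f) : f \in \mathcal{C}(\mathcal{C}_X)\} \subseteq \mathcal{C}_\mathcal{O}$.

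For step (ii), given a clopen cover $\mathcal{U} \in \mathcal{C}_\mathcal{O}$, I would use the Axiom of Choice to pick, for each member $\mathcal{C}_{\mathcal{T}_{X,x}} \in \mathcal{C}_X$, some $U \in \mathcal{U} \cap \mathcal{C}_{\mathcal{T}_{X,x}}$; such a $U$ exists because $\mathcal{U}$ covers $X$, so some $U \in \mathcal{U}$ contains $x$, and $U$ is automatically a clopen neighborhood of $x$, i.e.\ lies in $\mathcal{C}_{\mathcal{T}_{X,x}}$. Defining $f(\mathcal{C}_{\mathcal{T}_{X,x}}) := U$ yields a well-defined $f \in \mathcal{C}(\mathcal{C}_X)$ with $range(f) \subseteq \mathcal{U}$, which is the coinitiality condition.

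The only subtlety, and therefore the place where I would be most careful, is that the set $\mathcal{C}_X$ is indexed by the local clopen-neighborhood filters rather than by points, so distinct $x, y \in X$ sharing the same clopen neighborhoods contribute only a single member of $\mathcal{C}_X$. The choices in step (ii) should therefore be made once per element of $\mathcal{C}_X$ (equivalently, per filter), not once per point of $X$; the formulation above handles this implicitly, and any clopen set witnessing one representative automatically contains every other point sharing that filter. With this caveat noted, both required conditions follow by direct inspection of the definitions, so no deeper obstacle is expected.
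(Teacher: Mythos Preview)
Your proposal is correct and follows the same approach as the paper. The paper's proof is a single sentence that only spells out step~(ii) (given $\mathcal{U}\in\mathcal{C}_\mathcal{O}$, the choice function $f$ is ``the witness that $x\in f(\mathcal{C}_{\mathcal{T}_{X,x}})\in\mathcal{U}$''), leaving step~(i) implicit; your version is simply more explicit and handles the well-definedness issue that the paper elides.
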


\begin{proof} For every clopen cover $\mathcal{U}$, the
corresponding choice function $f\in \mathcal{C}(\mathcal{C}_{X})$
is simply the witness that $x\in
f(\mathcal{C}_{\mathcal{T}_{X,x}})\in\mathcal{U}$.

\end{proof}

By Theorem \ref{Thcl}, we get the following result.

\begin{corollary}\label{cor1} $G_1(\mathcal{C}_\mathcal{O},\mathcal{C}_\mathcal{O})$
and $G_1(\mathcal{C}_X, \neg \mathcal{C}_\mathcal{O})$ are dual.
\end{corollary}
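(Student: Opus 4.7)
The plan is to invoke Theorem~\ref{Thcl} essentially verbatim. That theorem asserts that whenever $\mathcal{R}$ is a reflection of $\mathcal{A}$, the games $G_1(\mathcal{A},\mathcal{B})$ and $G_1(\mathcal{R},\neg\mathcal{B})$ are dual in both the strategic and Markov senses. The proposition immediately preceding the corollary supplies the only nontrivial hypothesis needed, namely that $\mathcal{C}_X$ is a reflection of $\mathcal{C}_\mathcal{O}$: given any clopen cover $\mathcal{U}$ of $X$, one can pick for each $x\in X$ a member $U_x\in\mathcal{U}\cap\mathcal{C}_{\mathcal{T}_{X,x}}$, producing a choice function $f\in\mathcal{C}(\mathcal{C}_X)$ whose range refines $\mathcal{U}$ while still covering $X$; hence $\{\mathrm{range}(f):f\in\mathcal{C}(\mathcal{C}_X)\}$ is coinitial in $\mathcal{C}_\mathcal{O}$ with respect to $\subseteq$.

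Having this reflection in hand, I would specialize Theorem~\ref{Thcl} to $\mathcal{A}=\mathcal{B}=\mathcal{C}_\mathcal{O}$ and $\mathcal{R}=\mathcal{C}_X$. The conclusion is precisely the asserted duality between $G_1(\mathcal{C}_\mathcal{O},\mathcal{C}_\mathcal{O})$ and $G_1(\mathcal{C}_X,\neg\mathcal{C}_\mathcal{O})$. There is no genuine obstacle: the reflection proposition has already translated the covering data of $\mathcal{C}_\mathcal{O}$ into the local point-base data of $\mathcal{C}_X$, and the abstract dualization machinery of Clontz then transports winning strategies between the two games without any further verification. Thus the corollary reduces to citing Theorem~\ref{Thcl} and the preceding proposition.
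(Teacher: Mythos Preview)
Your proposal is correct and follows exactly the paper's own argument: the corollary is obtained by applying Theorem~\ref{Thcl} with $\mathcal{A}=\mathcal{B}=\mathcal{C}_\mathcal{O}$ and $\mathcal{R}=\mathcal{C}_X$, the reflection hypothesis being supplied by the immediately preceding proposition. There is nothing to add.
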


Note that $PC(X)$ and $G_1(\mathcal{C}_X, \neg \mathcal{C}_\mathcal{O})$ are the same game.

By Proposition \ref{b1}, $PC(X)$ and $QC(X)$ are equivalent, hence, we get the following result.

\begin{proposition}\label{pr1} The game $G_1(\mathcal{C}_X, \neg \mathcal{C}_\mathcal{O})$ is equivalent to the quasi-component-clopen game.
\end{proposition}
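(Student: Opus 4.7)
The plan is to verify Proposition~\ref{pr1} by observing that the result is essentially a chaining of two facts already established in the excerpt, with the only substantive step being to unpack the definition of $G_1(\mathcal{C}_X,\neg\mathcal{C}_\mathcal{O})$ and identify it with $PC(X)$.

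First I would justify the sentence ``$PC(X)$ and $G_1(\mathcal{C}_X,\neg\mathcal{C}_\mathcal{O})$ are the same game'' carefully. In the game $G_1(\mathcal{C}_X,\neg\mathcal{C}_\mathcal{O})$, Alice's move set is $\mathcal{C}_X=\{\mathcal{C}_{\mathcal{T}_{X,x}}:x\in X\}$, so picking an element of $\mathcal{C}_X$ is the same as naming a point $x\in X$. Bob then selects an element of $\mathcal{C}_{\mathcal{T}_{X,x}}$, i.e.\ a clopen set containing $x$. Finally, the winning condition $\{U_n:n<\omega\}\in\neg\mathcal{C}_\mathcal{O}$ means exactly that the $U_n$'s fail to form a clopen cover of $X$, which (since each $U_n$ is clopen) is equivalent to $X\neq\bigcup_{n<\omega}U_n$. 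Thus Bob wins $G_1(\mathcal{C}_X,\neg\mathcal{C}_\mathcal{O})$ iff Alice wins $PC(X)$ and vice versa, so the two games coincide up to relabeling of the players' roles; in particular Alice (resp.\ Bob) has a winning strategy in one iff she (resp.\ he) has one in the other, which is precisely the definition of equivalence given after Corollary~3.7.

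Next I would invoke Proposition~\ref{b1}, which asserts that $PC(X)$ and $QC(X)$ are equivalent, so that $\mathrm{Alice}\uparrow PC(X)\iff\mathrm{Alice}\uparrow QC(X)$ and similarly for Bob. Combining this with the identification established in the previous paragraph gives
\[
\mathrm{Alice}\uparrow G_1(\mathcal{C}_X,\neg\mathcal{C}_\mathcal{O})\iff\mathrm{Alice}\uparrow PC(X)\iff\mathrm{Alice}\uparrow QC(X),
\]
and likewise for Bob, which is exactly the assertion that $G_1(\mathcal{C}_X,\neg\mathcal{C}_\mathcal{O})$ and $QC(X)$ are equivalent.

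The only real obstacle is the first step, and it is conceptual rather than technical: one must be careful that ``$\neg\mathcal{C}_\mathcal{O}$'' in $G_1(\mathcal{C}_X,\neg\mathcal{C}_\mathcal{O})$ is being interpreted, per the earlier convention $G_1(\mathcal{A},\neg\mathcal{B}):=G_1(\mathcal{A},\mathcal{P}(\bigcup\mathcal{A})\setminus\mathcal{B})$, so that Bob's outputs are individual clopen sets (not covers) and the winning condition refers to the family he produces. Once this bookkeeping is in place, the proof is a one-line application of transitivity via Proposition~\ref{b1}.
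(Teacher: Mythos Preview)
Your approach is exactly the paper's: identify $G_1(\mathcal{C}_X,\neg\mathcal{C}_\mathcal{O})$ with $PC(X)$ and then invoke Proposition~\ref{b1}. One slip to fix: in $G_1(\mathcal{C}_X,\neg\mathcal{C}_\mathcal{O})$ Bob wins when the $U_n$'s fail to cover, which is precisely when \emph{Bob} (not Alice) wins $PC(X)$, so the two games are literally identical with no relabeling of players---your final conclusion ``Alice (resp.\ Bob) has a winning strategy in one iff she (resp.\ he) has one in the other'' is correct, but the intermediate sentence about swapping roles is not.
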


\begin{corollary} \label{b3}
{\it If a space $X$ is a union of countable number of
quasi-components, then Bob $\uparrow G_1(\mathcal{C}_\mathcal{O},
\mathcal{C}_\mathcal{O})$}.
\end{corollary}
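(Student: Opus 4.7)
The plan is to use the duality between $G_1(\mathcal{C}_\mathcal{O}, \mathcal{C}_\mathcal{O})$ and the quasi-component-clopen game to shift the problem from constructing a winning strategy for Bob (in the clopen selection game) to constructing a winning strategy for Alice in $QC(X)$, where the countable hypothesis is directly usable.

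Concretely, by Corollary \ref{cor1} together with Proposition \ref{pr1}, the games $G_1(\mathcal{C}_\mathcal{O},\mathcal{C}_\mathcal{O})$ and $QC(X)$ are strategically dual. In particular,
\[
\text{Alice}\uparrow QC(X)\quad\Longleftrightarrow\quad \text{Bob}\uparrow G_1(\mathcal{C}_\mathcal{O},\mathcal{C}_\mathcal{O}).
\]
So it suffices to exhibit a winning strategy for Alice in $QC(X)$ under the hypothesis that $X$ is a union of countably many quasi-components.

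For this, I would simply enumerate those quasi-components as $\{A_n:n\in\omega\}$ (possibly with repetitions if there are only finitely many) and define Alice's strategy $\varphi\colon\tau_c^{<\omega}\to Q_X$ by the predetermined choice $\varphi(\langle U_0,\dots,U_{n-1}\rangle)=A_n$, independent of Bob's previous moves. Then for any play, Bob must respond with clopen sets $U_n\supseteq A_n$, and since $\bigcup_{n\in\omega}A_n=X$ by hypothesis, one immediately gets $\bigcup_{n\in\omega}U_n=X$, so Alice wins. Applying the duality concludes the proof.

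The argument contains no real obstacle: the countability hypothesis plugs directly into Alice's enumeration, and the whole content of the corollary is packaged in the duality already established in Corollary \ref{cor1} and the equivalence in Proposition \ref{pr1}. The only thing to be mildly careful about is checking that Alice's enumeration genuinely is a legal strategy in $QC(X)$ (it is, since quasi-components are exactly what Alice is allowed to play) and that the duality is applied in the correct direction (Alice-winning in the dual game corresponds to Bob-winning in $G_1(\mathcal{C}_\mathcal{O},\mathcal{C}_\mathcal{O})$).
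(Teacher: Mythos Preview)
Your proposal is correct and matches the paper's intended (implicit) argument: the corollary is stated without proof precisely because it follows immediately from Corollary~\ref{cor1} and Proposition~\ref{pr1} by having Alice enumerate the countably many quasi-components as a predetermined strategy in $QC(X)$, exactly as you describe.
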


The following chain of implications always holds:

\begin{center}

$X$ is a union of countable number of quasi-components \\
$\Downarrow$ \\ $Bob \uparrow G_1(\mathcal{C}_\mathcal{O},
\mathcal{C}_\mathcal{O})$ \\ $\Downarrow$ \\ $Alice \not\uparrow
G_1(\mathcal{C}_\mathcal{O},
\mathcal{C}_\mathcal{O})$ \\ $\Updownarrow$ \\
$X$ has mildly Rothberger property.

\end{center}

The proof of the following result easily follows from replacing
the open sets with sets of a clopen base of the topological space.

\begin{theorem}
For a zero-dimensional space, the following statements hold:
\begin{enumerate}
\item
The game $G_1(\mathcal{C}_\mathcal{O}, \mathcal{C}_\mathcal{O})$ is equivalent to the game $G_1(\mathcal{O}, \mathcal{O})$.
\item
The point-clopen game is equivalent to the point-open game.
\end{enumerate}
\end{theorem}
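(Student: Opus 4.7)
The plan is to exploit the fact that in a zero-dimensional space $X$ the clopen sets form a base, so every open cover admits a clopen refinement and every open neighbourhood of a point contains a clopen neighbourhood of the same point. Both parts then reduce to four routine strategy-translation arguments.

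For part (1), two of the four implications are immediate: since $\mathcal{C}_\mathcal{O}\subseteq\mathcal{O}$, a winning Alice-strategy for $G_1(\mathcal{C}_\mathcal{O},\mathcal{C}_\mathcal{O})$ is already a winning Alice-strategy for $G_1(\mathcal{O},\mathcal{O})$ (she simply plays her clopen covers, which are also open covers, and Bob's possible responses are the same), and symmetrically a winning Bob-strategy for $G_1(\mathcal{O},\mathcal{O})$ restricts to a winning Bob-strategy for $G_1(\mathcal{C}_\mathcal{O},\mathcal{C}_\mathcal{O})$. For the remaining two implications I would refine. Given a winning Alice-strategy $\sigma$ in $G_1(\mathcal{O},\mathcal{O})$, define a strategy in the clopen game by refining each open cover $\mathcal{U}_n$ produced by $\sigma$ to a clopen cover $\mathcal{V}_n$ via the clopen base, playing $\mathcal{V}_n$, and when Bob answers with $V_n\in\mathcal{V}_n$ feeding back to $\sigma$ any $U_n\in\mathcal{U}_n$ with $V_n\subseteq U_n$. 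A Bob-win in the simulated clopen game would yield $\bigcup V_n=X$ and hence $\bigcup U_n=X$, contradicting that $\sigma$ is winning. The mirror argument converts a winning Bob-strategy $\tau$ in the clopen game to one in the open game: refine each open cover Alice plays, let $\tau$ select a clopen $V_n$ from the refinement, and have Bob play any $U_n$ from Alice's original cover with $V_n\subseteq U_n$; once again $\bigcup V_n=X$ forces $\bigcup U_n=X$.

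Part (2) is entirely analogous, now refining one neighbourhood at a time. Winning Bob-strategies transfer from $PC(X)$ to the point-open game for free (clopen responses are valid open responses, and the losing condition $\bigcup U_n\neq X$ is unchanged), and winning Alice-strategies transfer from the point-open game to $PC(X)$ for free. For the other two directions I would use that whenever $x_n\in U_n$ with $U_n$ open there is a clopen $V_n$ with $x_n\in V_n\subseteq U_n$; this lets one simulate a point-clopen play by a point-open play (and conversely), and the inclusion $\bigcup V_n\subseteq \bigcup U_n$ again propagates the correct winning condition.

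The only subtlety I anticipate is keeping track of the direction of inclusion when translating the winning condition: covering propagates upward from the clopen refinement to the original open sets, while failure to cover propagates downward. Once each simulation is oriented so that the desired inequality points the right way, the four arguments in each part collapse to the same short calculation, which is what the author means by the hint that the proof follows by replacing open sets with elements of a clopen base.
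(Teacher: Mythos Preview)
Your proposal is correct and is precisely the natural expansion of the paper's one-line hint, which says only that the result follows by replacing open sets with elements of a clopen base. The paper gives no further details, so your strategy-translation arguments (refining open covers and open neighbourhoods to clopen ones and tracking the inclusion $\bigcup V_n\subseteq\bigcup U_n$) are exactly what is intended.
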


From \cite{H27a} and \cite{H42a}, we have the following result.

\begin{theorem}\label{th314}
For a space $X$, the following statements hold :
\begin{enumerate}
\item \cite{H27a} $X$ satisfies $S_1(\mathcal{O}, \mathcal{O})$
iff $Alice \not\uparrow G_1(\mathcal{O}, \mathcal{O})$.

 \item \cite{H42a} $X$ satisfies
$S_1(\mathcal{C}_\mathcal{O}, \mathcal{C}_\mathcal{O})$ iff $Alice
\not\uparrow G_1(\mathcal{C}_\mathcal{O},
\mathcal{C}_\mathcal{O})$.
\end{enumerate}
\end{theorem}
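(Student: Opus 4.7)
The plan is to prove both equivalences by a uniform argument, since the only structural difference between parts (1) and (2) is the family of covers (open vs.\ clopen) being used. In each case, one direction is immediate from the definitions while the other is a Pawlikowski-type reduction that converts a winning strategy into a witness that the selection principle fails.

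For the easy direction $(\Leftarrow)$ in both parts, I would contrapose: assume $S_{1}(\mathcal{O},\mathcal{O})$ (respectively $S_{1}(\mathcal{C}_\mathcal{O},\mathcal{C}_\mathcal{O})$) fails and produce a winning predetermined strategy for Alice. Concretely, the failure of the selection principle yields a sequence $\langle \mathcal{U}_n : n \in \omega\rangle$ of open (clopen) covers such that no choice $U_n \in \mathcal{U}_n$ produces a cover of $X$. Defining $\sigma(\langle V_0,\dots,V_{n-1}\rangle):=\mathcal{U}_n$, Alice ignores Bob's moves and plays $\mathcal{U}_n$ in round $n$; any play against $\sigma$ is exactly a selection of the forbidden form, so Alice wins. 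In particular this gives a winning strategy, and even a winning predetermined one.

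For the hard direction $(\Rightarrow)$, suppose $\sigma$ is an arbitrary winning strategy for Alice in $G_{1}(\mathcal{O},\mathcal{O})$. The aim is to manufacture a single sequence $\langle \mathcal{V}_n : n\in\omega\rangle$ of open covers of $X$ showing that $S_{1}(\mathcal{O},\mathcal{O})$ fails. Following Pawlikowski's technique, I would first consider the tree $T$ of all finite histories $s=\langle V_0,\dots,V_{n-1}\rangle$ of legal plays against $\sigma$, together with Alice's response $\sigma(s)$ at each node. At level $n$ there is a family of covers $\{\sigma(s) : |s|=n\}$; the core step is to merge this family into a single cover $\mathcal{V}_n$ together with a bookkeeping scheme so that any legal selection $W_n \in \mathcal{V}_n$ can be unwound to produce an actual play of the game against $\sigma$. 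Since $\sigma$ is winning, that play is lost by Bob, hence the $W_n$'s do not cover $X$, contradicting the selection principle. The same argument gives part (2): the reduction uses only that each $\sigma(s)$ is a cover from the prescribed family, and never exploits openness beyond this, so replacing $\mathcal{O}$ with $\mathcal{C}_\mathcal{O}$ throughout preserves validity and delivers the clopen version from \cite{H42a}.

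The main obstacle is the merging step at each level of $T$. A priori the family $\{\sigma(s):|s|=n\}$ can be large, and one needs to collapse it to one cover $\mathcal{V}_n$ without losing the ability to trace a selection back to a single branch of $T$. This is precisely the combinatorial heart of Pawlikowski's argument and the point where the proof has to be carried out with care; everything else in the argument is bookkeeping around the fixed winning strategy $\sigma$.
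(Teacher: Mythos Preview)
The paper does not supply its own proof of this theorem: it is introduced with ``From \cite{H27a} and \cite{H42a}, we have the following result,'' and the two parts are simply attributed to Pawlikowski and to Bhardwaj--Tyagi with no argument given. So there is no in-paper proof to compare against; your outline already offers more detail than the paper does.

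That said, your sketch stops exactly where the nontrivial content lies. Two concrete omissions. First, the tree $T$ of finite histories against an arbitrary strategy $\sigma$ need not be countably branching: a node $s$ has one successor for each member of $\sigma(s)$, so if Alice plays large covers the family $\{\sigma(s):|s|=n\}$ can be uncountable, and there is no obvious way to ``merge'' it into a single cover $\mathcal{V}_n$. The standard fix---which your proposal does not mention---is to observe that a Rothberger (resp.\ mildly Rothberger) space has the property that every open (resp.\ clopen) cover admits a countable subcover, so one may replace $\sigma$ by a winning strategy that plays only countable covers; the tree then becomes $\omega$-branching. Second, even after this reduction, producing a single sequence $\langle \mathcal{V}_n\rangle$ from the level-families such that an \emph{arbitrary} selection $W_n\in\mathcal{V}_n$ can be traced back to \emph{one} branch of $T$ is the actual Pawlikowski argument (a recursive fusion over the tree), and you explicitly leave it undone. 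As it stands the proposal is a correct plan but not a proof; the ``merging step'' you flag is a genuine gap, not bookkeeping.
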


\begin{corollary}
For a space $X$, the following statements are equivalent :
\begin{enumerate}
\item $X$ satisfies $S_1(\mathcal{C}_\mathcal{O},
\mathcal{C}_\mathcal{O})$; 

\item $Alice {\not\uparrow\atop pre} G_1(\mathcal{C}_\mathcal{O},
\mathcal{C}_\mathcal{O})$;

\item $Alice \not\uparrow G_1(\mathcal{C}_\mathcal{O},
\mathcal{C}_\mathcal{O})$; \item $Bob \not\uparrow
G_1(\mathcal{C}_X, \neg \mathcal{C}_\mathcal{O})$;

\item $Bob {\not\uparrow\atop mark} G_1(\mathcal{C}_X, \neg
\mathcal{C}_\mathcal{O})$;

\item $Bob \not\uparrow PC(X)$; \item $Bob \not\uparrow QC(X)$;

\item $Bob {\not\uparrow\atop mark} PC(X)$;

\item $Bob {\not\uparrow\atop mark} QC(X)$.

\end{enumerate}
\end{corollary}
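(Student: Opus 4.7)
The plan is to assemble this corollary from the machinery developed above, organizing the nine equivalences into three clusters: (i) the selection-principle side comprising (1)--(3), (ii) the dual covering game on $\mathcal{C}_X$ in (4)--(5), and (iii) the point-clopen and quasi-component-clopen reformulations (6)--(9). The strategy is to verify a few implications within each cluster and to link the clusters by citing the duality and equivalence results already established.

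First I would handle (1)$\Leftrightarrow$(2)$\Leftrightarrow$(3). The equivalence (1)$\Leftrightarrow$(2) is immediate from the remark made right after the definition of a predetermined strategy, where it is noted that ``Alice has no winning predetermined strategy in $G_1(\mathcal{A},\mathcal{B})$'' is by definition the selection principle $S_1(\mathcal{A},\mathcal{B})$. The equivalence (1)$\Leftrightarrow$(3) is Theorem \ref{th314}(2) applied to $\mathcal{A}=\mathcal{B}=\mathcal{C}_\mathcal{O}$.

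Next, Corollary \ref{cor1} says that $G_1(\mathcal{C}_\mathcal{O},\mathcal{C}_\mathcal{O})$ and $G_1(\mathcal{C}_X,\neg\mathcal{C}_\mathcal{O})$ are dual, both strategically and in the Markov sense. Unpacking strategic duality yields (3)$\Leftrightarrow$(4), and unpacking Markov duality yields (2)$\Leftrightarrow$(5). Since $PC(X)$ and $G_1(\mathcal{C}_X,\neg\mathcal{C}_\mathcal{O})$ are literally the same game (as observed right after Corollary \ref{cor1}), we also have (4)$=$(6) and (5)$=$(8).

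The only remaining point is to pass from the point-clopen game to the quasi-component-clopen game while respecting the Markov restriction. Proposition \ref{b1} already gives (6)$\Leftrightarrow$(7). The main observation I would add is that the translations between the two games in the proof of Proposition \ref{b1} are \emph{memoryless}: Bob's strategy is transported via $x_n\mapsto Q[x_n]$ in one direction and via an arbitrary choice $A_n\mapsto x_n\in A_n$ in the other, with no dependence on earlier moves. Hence a winning Markov strategy is carried to a winning Markov strategy, giving (8)$\Leftrightarrow$(9). I expect this Markov-preservation check to be the only non-routine step of the whole argument; everything else is an application of previously cited duality and equivalence results.
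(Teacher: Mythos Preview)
Your argument is correct and mirrors the paper's intended derivation: the corollary is stated without proof and is meant to be read off directly from Theorem~\ref{th314}(2), Corollary~\ref{cor1}, the identification of $PC(X)$ with $G_1(\mathcal{C}_X,\neg\mathcal{C}_\mathcal{O})$, and Proposition~\ref{b1}. Your explicit observation that the translations in the proof of Proposition~\ref{b1} are memoryless (hence carry Markov strategies to Markov strategies, yielding (8)$\Leftrightarrow$(9)) is a detail the paper leaves implicit but which is indeed needed, since ``equivalent'' as defined there speaks only of full strategies.
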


\begin{corollary}
For a zero-dimensional space $X$, the following statements are equivalent :
\begin{enumerate}
\item $X$ satisfies $S_1(\mathcal{O}, \mathcal{O})$; \item $X$
satisfies $S_1(\mathcal{C}_\mathcal{O}, \mathcal{C}_\mathcal{O})$;

\item $Alice {\not\uparrow\atop pre} G_1(\mathcal{C}_\mathcal{O},
\mathcal{C}_\mathcal{O})$;

\item $Alice {\not\uparrow\atop pre} G_1(\mathcal{O},
\mathcal{O})$;

\item $Alice \not\uparrow G_1(\mathcal{O}, \mathcal{O})$; \item
$Alice \not\uparrow G_1(\mathcal{C}_\mathcal{O},
\mathcal{C}_\mathcal{O})$;

\item $Bob \not\uparrow G_1(\mathcal{P}_X, \neg \mathcal{O})$;

\item $Bob \not\uparrow G_1(\mathcal{C}_X, \neg
\mathcal{C}_\mathcal{O})$; \item $Bob \not\uparrow PO(X)$; \item
$Bob \not\uparrow PC(X)$; \item $Bob \not\uparrow QC(X)$;

\item $Bob {\not\uparrow\atop mark} PO(X)$; \item $Bob
{\not\uparrow\atop mark} PC(X)$; \item $Bob {\not\uparrow\atop
mark} QC(X)$.
\end{enumerate}
\end{corollary}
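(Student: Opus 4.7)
The plan is to assemble the fourteen equivalences by chaining together the results already established in this section; no essentially new argument is required. The engine driving the proof is the zero-dimensional collapse identifying the open and clopen versions of the selection principle (Theorem \ref{a1}), together with the strategic and Markov dualities already obtained for both the Rothberger game and its mildly Rothberger counterpart. Every item in the list will be tied back to item (1) by a short chain, and the only point of vigilance is to keep the strategic and Markov tracks separate.

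First I would use Theorem \ref{a1} to get (1)$\Leftrightarrow$(2), and then the preliminary remark that $S_1(\mathcal{A},\mathcal{B})$ coincides with $Alice {\not\uparrow\atop pre} G_1(\mathcal{A},\mathcal{B})$ to get (1)$\Leftrightarrow$(4) and (2)$\Leftrightarrow$(3); Theorem \ref{th314} then supplies (1)$\Leftrightarrow$(5) and (2)$\Leftrightarrow$(6). After this step items (1)--(6) already sit in a single equivalence class.

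Next I activate the duality machinery on both sides. The Rothberger strategic duality, combined with the remark that $G_1(\mathcal{P}_X,\neg\mathcal{O})$ is equivalent to $PO(X)$, yields (5)$\Leftrightarrow$(7)$\Leftrightarrow$(9), while the corresponding Rothberger Markov duality gives (4)$\Leftrightarrow$(12). Corollary \ref{cor1}, together with the paper's explicit remark that $PC(X)$ is literally the game $G_1(\mathcal{C}_X,\neg\mathcal{C}_\mathcal{O})$, gives strategically (6)$\Leftrightarrow$(8)$\Leftrightarrow$(10) and via Markov duality (3)$\Leftrightarrow$(13). Finally, Proposition \ref{b1} says $PC(X)$ and $QC(X)$ are equivalent games, yielding (10)$\Leftrightarrow$(11) immediately; the translations constructed in that proof depend only on the most recent move and the turn number, so they also carry Markov strategies to Markov strategies, giving (13)$\Leftrightarrow$(14).

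The main obstacle, if one wishes to call it that, is precisely this last extension: Proposition \ref{b1} as stated guarantees only the correspondence of ordinary winning strategies, so one must verify that the constructions in its proof respect the Markov condition. Everything else is bookkeeping, distributing the strategic dualities among items (5)--(11) and the Markov dualities among (3), (4), (12), (13), (14). Once these are tabulated, the full equivalence of (1)--(14) falls out.
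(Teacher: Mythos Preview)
Your proposal is correct and matches the paper's intended (implicit) approach: the corollary is stated without proof in the paper, and is meant to follow exactly by chaining Theorem~\ref{a1}, the remark identifying $S_1(\mathcal{A},\mathcal{B})$ with $Alice{\not\uparrow\atop pre}G_1(\mathcal{A},\mathcal{B})$, Theorem~\ref{th314}, the dualities for the Rothberger and mildly Rothberger games, the identification of $PC(X)$ with $G_1(\mathcal{C}_X,\neg\mathcal{C}_\mathcal{O})$, and Proposition~\ref{b1}. Your extra observation that the strategy translations in the proof of Proposition~\ref{b1} preserve the Markov condition (needed for (13)$\Leftrightarrow$(14)) is a point the paper leaves implicit, and your verification of it is correct.
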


In \cite{H34a}, Galvin and
Telg\'{a}rsky (Theorem 6.3 in \cite{tel}) proves: If $X$ is a Lindel\"{o}f space in which each
element is $G_{\delta}$, then Bob has a winning strategy in
$G_1(\mathcal{O}, \mathcal{O})$ if, and only if, $X$ is countable.


\begin{theorem}
Let $X$ be a space in which each quasi-component is an
intersection of countably many clopen sets, then $Bob \uparrow
G_1(\mathcal{C}_\mathcal{O}, \mathcal{C}_\mathcal{O})$ if, and
only if, $X$ is a union of countably many quasi-components.
\end{theorem}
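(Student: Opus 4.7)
The plan is to handle the two directions separately, with the reverse direction being essentially a restatement of Corollary \ref{b3} already in the paper, and the forward direction requiring a Telg\'arsky-style tree construction that exploits the hypothesis on quasi-components.

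For the ``if'' direction, suppose $X=\bigcup_{n\in\omega}A_n$ where the $A_n$ are quasi-components. Then by Corollary \ref{b3}, $Bob\uparrow G_1(\mathcal{C}_\mathcal{O},\mathcal{C}_\mathcal{O})$: given any clopen cover $\mathcal{U}_n$ Alice plays in round $n$, Bob picks a member of $\mathcal{U}_n$ containing $A_n$ (which exists since $A_n$ is a quasi-component and each clopen cover element that meets $A_n$ contains it).

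For the ``only if'' direction, I use the duality machinery from the previous subsection. By Corollary \ref{cor1} and Proposition \ref{pr1}, $Bob\uparrow G_1(\mathcal{C}_\mathcal{O},\mathcal{C}_\mathcal{O})$ implies that Alice has a winning strategy $\sigma$ in the quasi-component-clopen game $QC(X)$. The plan is to use $\sigma$, together with the hypothesis that every quasi-component is an intersection of countably many clopen sets, to build a countable list of quasi-components that exhausts~$X$.

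Concretely, I would construct, by induction on the length of $s\in\omega^{<\omega}$, quasi-components $A_s\subseteq X$ and clopen sets $\{C_{s,n}:n\in\omega\}$ with $A_s=\bigcap_{n\in\omega}C_{s,n}$ as follows. Set $A_{\emptyset}=\sigma(\langle\rangle)$, and fix a clopen $G_\delta$-representation $A_{\emptyset}=\bigcap_nC_{\emptyset,n}$. Having defined $A_{s}$ and $C_{s,n}$ for all $s$ of length $\le k$, for each $s=\langle n_0,\ldots,n_{k-1}\rangle$ and each $n\in\omega$ define
\[
A_{s\smallfrown\langle n\rangle}=\sigma\bigl(\langle C_{s|_1,n_0},\,C_{s|_2,n_1},\ldots,C_{s|_k,n_{k-1}}\rangle\bigr),
\]
which is a legal Alice move since Bob's responses $C_{s|_i,n_{i-1}}$ are clopen sets containing $A_{s|_{i-1}}$, and fix a representation $A_{s\smallfrown\langle n\rangle}=\bigcap_mC_{s\smallfrown\langle n\rangle,m}$.

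The claim to verify is that $X=\bigcup_{s\in\omega^{<\omega}}A_s$, which, since $\omega^{<\omega}$ is countable, finishes the proof. If some $x\in X$ belonged to no $A_s$, I would construct a Bob play consistent with $\sigma$ that defeats Alice: at stage $k$, having played along the branch $s=\langle n_0,\ldots,n_{k-1}\rangle$, Alice plays the quasi-component $A_s$, which by assumption omits $x$; then $x\notin\bigcap_nC_{s,n}$, so there is some $n_k$ with $x\notin C_{s,n_k}$, and Bob plays $U_k=C_{s,n_k}$. The resulting play is legal and consistent with $\sigma$, yet $x\notin\bigcup_kU_k$, contradicting that $\sigma$ is winning for Alice. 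The main delicate point is just to keep the bookkeeping of the tree indexing coherent so that Bob's plays along any branch really do conform to $\sigma$'s requirements; the $G_\delta$-in-clopens hypothesis is exactly what makes the branching countable and the diagonal Bob-play above possible.
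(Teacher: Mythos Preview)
Your proposal is correct and follows essentially the same approach as the paper's proof: both invoke the duality with $QC(X)$ (via Corollary~\ref{cor1} and Proposition~\ref{pr1}/\ref{b1}) to pass to a winning strategy for Alice, build an $\omega^{<\omega}$-indexed tree of quasi-components from that strategy using the clopen $G_\delta$ hypothesis, and then diagonalize against a hypothetical uncovered point to derive a contradiction. One minor slip in your displayed recursion: the right-hand side should be $\sigma(\langle C_{s\upharpoonright 0,\,n_0},\ldots,C_{s\upharpoonright(k-1),\,n_{k-1}},\,C_{s,n}\rangle)$ (your restriction indices are shifted by one and the final term $C_{s,n}$ is missing, so as written the formula does not depend on $n$), but your prose description of the construction and the diagonal argument is correct.
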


\begin{proof}
Let Bob have a winning strategy in the game
$G_1(\mathcal{C}_\mathcal{O}, \mathcal{C}_\mathcal{O})$ on $X$. Since the game $G_1(\mathcal{C}_\mathcal{O},
\mathcal{C}_\mathcal{O})$ and the point-clopen game are dual  and,
by Proposition \ref{b1}, the point-clopen game and the
quasi-component-clopen game are equivalent.

Let Alice have a winning strategy
in the quasi-component-clopen game. Let $\varphi$ be a winning
strategy of Alice in the quasi-component-clopen game on $X$. For
every quasi-component $Q$, there is a sequence $\langle
V_k : k \in \omega \rangle$ of clopen sets such that $Q=
\cap_{k \in \omega} V_k$.

 So we restrict the move of Bob from
$\{V_k: k \in \omega\}$ for $Q$ played by Alice.

Let Alice start the play of the point-clopen game by quasi-component $\varphi(\langle \rangle) = Q_{\langle \rangle}$. Then Bob replies with a clopen set of the form $V_{k_0, \langle \rangle}$ for some $k_0 \in \omega$.

Alice's next move in the play is a quasi-component $\varphi(\langle V_{k_0, \langle \rangle} \rangle) = Q_{\langle k_0 \rangle}$. Then Bob replies with a clopen set of the form $V_{k_1, \langle k_0 \rangle}$ for some $k_1 \in \omega$.

Now Alice's next move in the play is a quasi-component $\varphi(\langle V_{k_0, \langle \rangle}, V_{k_1, \langle k_0 \rangle} \rangle) = Q_{\langle k_0, k_1 \rangle}$. Then Bob replies with a clopen set of the form $V_{k_2, \langle k_0, k_1 \rangle}$ for some $k_2 \in \omega$ and so on.

Similarly we are defining $\langle Q_s : s \in \omega^{< \omega} \rangle$ by setting $Q_{\langle \rangle} = \varphi(\langle \rangle)$ and for each $s \in \omega^{< \omega}$ and for each $k \in \omega$, defining
\begin{center}
$Q_{s \frown \langle k \rangle} = \varphi(\langle V_{s(0), s \upharpoonright 0}, V_{s(1), s \upharpoonright 1},...,V_{s(m-1), s \upharpoonright (m-1)}, V_{k,s} \rangle)$,
\end{center}
where $m = dom(s)$.
From this we construct a countable collection $\{ Q_s : s \in \omega^{< \omega} \}$.

Now to show that $\bigcup \{ Q_s : s \in \omega^{< \omega} \} = X$.
If possible suppose that $\bigcup \{ Q_s : s \in \omega^{< \omega} \} \neq X$, then there is $y \in X \setminus \{ Q_s : s \in \omega^{< \omega} \}$. Then $y \notin Q_s$ for any $s \in \omega^{< \omega}$. For each $Q_n \in \{ Q_s : s \in \omega^{< \omega} \}$, there is some $k_n$ such that $y \notin V_{k_n, n}$. Then Alice loses the following play of the quasi-component-clopen game
\begin{center}
$\langle Q_0, V_{k_0,0}, Q_1, V_{k_1,1},..., Q_n, V_{k_n, n},...\rangle$
\end{center}
in which Alice uses the strategy $\varphi$ since $y \notin \bigcup_{n \in \omega} V_{k_n, n}$, a contradiction.

Converse follows from Corollary \ref{b3}.
\end{proof}

\subsection{Determinacy and $G_1(\mathcal{C}_\mathcal{O}, \mathcal{C}_\mathcal{O})$ game}
A game $G$ played between two players Alice and Bob is determined if either Alice has a winning strategy in game
$G$ or Bob has a winning strategy in game $G$. Otherwise $G$ is undetermined.

It can be observed that the game $G_1(\mathcal{C}_\mathcal{O},
\mathcal{C}_\mathcal{O})$ is determined for every countable space.
But in a mildly Rothberger space in which each quasi-component is
an intersection of countably many clopen sets with uncountable
many quasi-components, none of the players Alice and Bob have a
winning strategy. So $G_1(\mathcal{C}_\mathcal{O},
\mathcal{C}_\mathcal{O})$ is undetermined for a mildly Rothberger
space in which each quasi-component is an intersection of
countably many clopen sets with uncountable many quasi-components.
Thus every uncountable zero-dimensional mildly Rothberger metric
space is undetermined.

Recall that an uncountable set $L$ of reals is a {\it Luzin set}
if for each meager set $M$, $L \cap M$ is countable. The Continuum
Hypothesis implies the existence of a Luzin set. A Luzin set is an
example of a space for which the game
$G_1(\mathcal{C}_\mathcal{O}, \mathcal{C}_\mathcal{O})$ is
undetermined.

\medskip

{\bf Acknowledgements.} The authors would like to thank the
referees for careful reading and valuable comments. The work was performed as part of research conducted in the Ural Mathematical Center with the financial support
of the Ministry of Science and Higher Education of the Russian
Federation (Agreement number 075-02-2023-913).

\end{document}